\def\bR {\mathbf{R}}
\def\bS {\mathbf{S}}
\def\bZ {\mathbf{Z}}
\def\cF {\mathcal{F}}
\newcommand{\tr}{\operatorname{tr}}
\newcommand{\ba}{\begin{aligned}}
\newcommand{\ea}{\end{aligned}}
\newcommand{\be}{\begin{equation}}
\newcommand{\ee}{\end{equation}}
\def\scrL{\mathscr{L}}
\def\eps {{\epsilon}}
\def\th {{\theta}}
\def\om {{\omega}}
\def\d {{\partial}}
\def\grad {{\nabla}}
\newcommand{\mc}{\mathcal}
\newcommand{\pt}{\partial}
\newcommand{\vp}{\varphi}
\newcommand{\br}{\mathbb{R}}
\renewcommand{\om}{\omega}
\newcommand{\te}{\theta}
\renewcommand{\eps}{\varepsilon}
\newcommand{\e}{\varepsilon}
\renewcommand{\r}{\rho}
\renewcommand{\(}{\left(}
\renewcommand{\)}{\right)}
\renewcommand{\[}{\left[}
\renewcommand{\]}{\right]}
\newtheorem{thm}{Theorem}
\newtheorem{lem}[thm]{Lemma}
\newtheorem{cor}[thm]{Corollary}
\newtheorem{remark}[thm]{Remark}
\def\be{\begin{equation}}
\def\ee{\end{equation}}
\def\bea{\begin{eqnarray}}
\def\eea{\end{eqnarray}}
\numberwithin{thm}{section}
\numberwithin{equation}{section}
\numberwithin{figure}{section}
\title{A gradient flow perspective on the quantization problem}
\author{
   Mikaela Iacobelli\
   \thanks{Durham University, Department of Mathematical Sciences, Stockton Road, DH1 3LE, Durham, UK. Email: \textsf{mikaela.iacobelli@durham.ac.uk}}
}
\begin{document}

\maketitle

\begin{abstract}
In this paper we review recent results by the author on the problem of quantization of measures. More precisely, we propose a dynamical approach, and we investigate it in dimensions 1 and 2. Moreover, we discuss a recent general result on the static problem on arbitrary Riemannian manifolds.
\end{abstract}


\section{Introduction}

The term \emph{quantization} refers to the process of finding an \emph{optimal} approximation of a $d$-dimensional probability density by a convex combination of a finite number $N$ of Dirac masses. The quality of such approximation is usually measured in terms of the Monge-Kantorovich or Wasserstein metric.

The need for such approximations first arose in the context of information theory in the early '50s. The idea was to see the quantized measure as the digitization of an analog signal intended for storage on a data storage medium or transmitted via a channel \cite{BW, GG}. Another classical application of the quantization problem concerns numerical integration, where integrals with respect to certain probability measures need to be replaced by integrals with respect to a good discrete approximation of the original measure \cite{GPP}. 
Moreover, this problem has applications in cluster analysis, materials science (crystallization and pattern formation \cite{BPT}), pattern recognition,
speech recognition, stochastic processes, and mathematical models in economics \cite{BS, BJR, S2015} (optimal location of service centers). 
Due to the wide range of applications aforementioned, the quantization problem has been studied with several completely different techniques, and a comprehensive review on the topic goes beyond the purposes of this paper. Nevertheless, it is worth to mention that the problem of the quantization of measure has been studied with a $\Gamma$-convergence approach in \cite{BJR, BJR2, BBSS, MT}.
For a detailed exposition on the quantization problem and a complete list of references see the monograph \cite{GL} and \cite[Chapter $33$]{Gr07}. 

\subsection{A motivating example}

{\bf Question}: what is the ``optimal'' way to locate $N$ clinics in a region $\Omega$ with population density $\rho$?

\begin{figure}[h!]
\center
\includegraphics[scale=0.23]{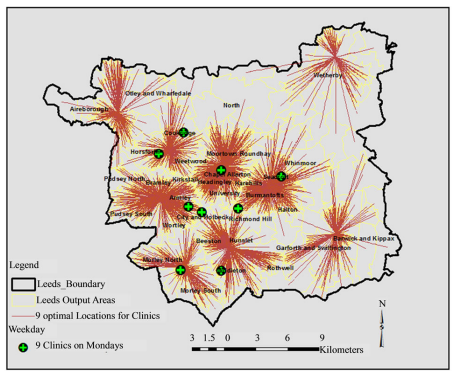}
\caption{Optimal Location of Smoking Cessation Services. Image from \cite{AA}.}
\label{Leeds}
\end{figure}
\smallskip

To answer this question we have to choose:
\begin{itemize}
\item A suitable notion of ``optimality'';
\item the location of each clinic $x_i$;
\item the capacity of each clinic $m_i.$ 
\end{itemize} 

\subsection{Setup of the problem}

We now introduce the theoretical setup of the problem.
Given $r\ge 1$, consider $\rho$ a probability density on an open set $\Omega \subset \br^d$ with finite $r$-th moment,
$$
\int_{\Omega}|y|^r \rho(y)dy<\infty.
$$
Given $N$ points $x^{1}, \ldots, x^{N} \in \Omega,$ we seek the best approximation of $\rho,$ in the sense of Wasserstein distances\footnote{Equivalently known as Monge-Kantorovich distances; we shall use both terms interchangeably. }, by a convex combination of Dirac masses centered at $x^{1}, \ldots, x^{N}:$ 

$$
 W_r\Big(\rho,\sum_im_i \delta_{x^i}\Big)^r:=
\underset{\gamma}{\inf}\bigg\{
\int_{\Omega\times\Omega}|x-y|^rd\gamma(x,y)\,:\,
(\pi_1)_\#\gamma=\sum_im_i \delta_{x^i},\  (\pi_2)_\#\gamma=\rho(y)dy\bigg\},
$$
where $\gamma$ varies among all probability measures on $\Omega \times \Omega$, and $\pi_i: \Omega \times \Omega \to \Omega$ ($i=1,2$) denotes the canonical projection onto the $i$-th factor
 (see \cite{AGS, S2015} for more details on the Monge-Kantorovitch distance between probability measures).

\begin{remark}{\rm We note the following equivalent definition, which the reader may find more intuitive.
Since $\rho$ is absolutely continuous, it follows by the general theory of optimal transport (see for instance \cite{AGS}) that the Wasserstein distance can also be obtained as an infimum over maps: 
 $$
 W_r\Big(\rho,\sum_im_i \delta_{x^i}\Big)^r:=\inf
\int_{\Omega}|y-T(y)|^r\rho(y)\,dy
$$
where $T:\Omega\to\Omega$ varies among all maps that transport $\rho$ onto 
$\sum_im_i \delta_{x^i}$.
In other words, the transport map $T$ partitions a region $\Omega$ with population density $\rho$ into $N$ regions, $\{T^{-1}(x_i)\}_{i=1}^N$. Region $T^{-1}(x_i)$ is assigned to the reource (e.g., clinic) located at point $x_i$ of mass $m_i.$ If $T$ is an \emph{optimal} transport map, then it minimize the $L^r$ distance between the population and the resources (see Figure \ref{transport}).
\begin{figure}[h!]
\center
\includegraphics[scale=0.5]{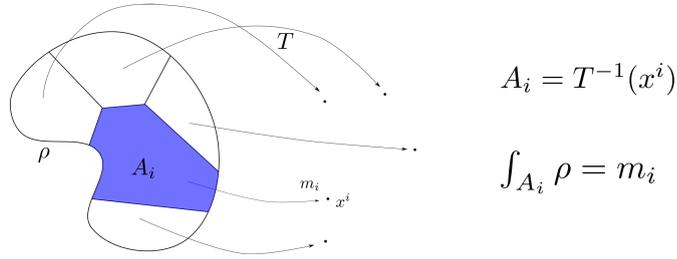}
\caption{Transport map}
\label{transport}
\end{figure}
}\end{remark}
Hence, we  minimize
$$
\inf \bigg\{ W_r\bigg(\sum_im_i \delta_{x^i}, \rho(y)dy\bigg)^r\,:\, m_1, \ldots, m_N\ge0, \  \sum_{i=1}^Nm_i=1
\bigg\}.
$$
As shown in \cite{GL}, the following facts hold:
\begin{enumerate}
\item
The best choice of the masses $m_i$ is given by
$$
m_i:= \int_{W(x^i|\{x^{1}, \ldots, x^{N}\} )} \rho(y)dy,
$$
where 
$$
W(x^i|\{x^{1}, \ldots, x^{N}\}):= \{y \in \Omega\ : \ |y-x^i| \le |y-x^{j}|,\  j \in 1, \ldots, N \}
$$ 
is the so called  \emph{Voronoi cell} of $x^i$ in the set $x^1, \ldots, x^N$
(see Figure \ref{Voronoi}).

\begin{figure}[h!]
\center \includegraphics[scale=0.06]{Euclidean_Voronoi_diagram.png}
\caption{$20$ points and their Voronoi cells. \newline Image from Wikipedia \protect\url{https://en.wikipedia.org/wiki/Voronoi_diagram}.}
\label{Voronoi}
\end{figure}

\item The following identity holds:
\begin{multline*}
\inf \bigg\{ MK_r\bigg(\sum_im_i \delta_{x^i}, \rho(y)dy\bigg)\,:\, m_1, \ldots, m_N\ge0, \  \sum_{i=1}^N m_i=1
\bigg\}\\
=F_{N,r} (x^{1}, \ldots, x^{N}) ,
\end{multline*}
where
 $$
F_{N,r} (x^{1}, \ldots, x^{N}) := \int_\Omega \underset{1\le i \le N}{\mbox{min}} | x^i-y |^r\rho(y)dy.
$$
\end{enumerate}

Now the found the optimal masses in terms of $x^{1}, \ldots, x^{N}$, we seek for the optimal location of these points by minimizing $F_{N,r}$. As shown in \cite[Chapter 2, Theorem 7.5]{GL},
if one chooses $x^{1}, \ldots, x^{N}$ in an optimal way by minimizing the functional $F_{N,r}: (\br^d)^N \to \br^+,$ then
in the limit as $N$ tends to infinity these points distribute themselves according to a probability density proportional to $\rho^{d/{d+r}}. $ 
More precisely, under the assumption that
\begin{equation}
\label{moment rho}
\int_{\br^d}|x|^{r+\delta}\rho(x)\,dx<\infty \qquad \text{for some $\delta>0$}
\end{equation}
one has
\be\label{flow1:close}
\frac{1}{N}\sum_{i=1}^N \delta_{x^i} \rightharpoonup \frac{\rho^{d/{d+r}}}{\int_\Omega \rho^{d/{d+r}}(y)dy}\,dx\qquad \text{weakly in $\mathcal P(\Omega)$}.
\ee
These issues have been extensively studied from the point of view of the calculus of variations \cite[Chapter 1, Chapter 2]{GL}.
In \cite{CGI}, we considered a gradient flow approach to this problem in dimension $1$.
Now we will explain the general heuristic of the dynamical approach, and we will later discuss the main difficulties in extending this method to higher dimension.
\smallskip

\subsection{A dynamical approach to the quantization problem.}
Given $N$ points $x^{1}_0, \ldots, x^{N}_0$ in $\br^d$, we consider their evolution under the gradient flow
generated by $F_{N,r}$, that is, we solve the system of ODEs in $(\br^d)^N$
\be
\label{flow1:eq:ODEintro}
\left\{
\begin{array}{rl}
\bigl(\dot x^{1}(t),\ldots,\dot x^{N}(t)\bigr)&=-\nabla F_{N,r}\bigl(x^{1}(t),\ldots,x^{N}(t)\bigr),\\
\bigl(x^{1}(0),\ldots,x^{N}(0)\bigr)&=(x^{1}_0, \ldots, x^{N}_0).
\end{array}
\right.
\ee

As usual in gradient flow theory, as $t$ tends to infinity one expects the points $\bigl(x^{1}(t),\ldots,x^{N}(t)\bigr)$ to converge to a minimizer
$(\bar x^{1},\ldots,\bar x^N)$
of $F_{N,r}.$ Hence, in view of \eqref{flow1:close}, the empirical measure 
$$
\frac{1}{N}\sum_{i=1}^N \delta_{\bar x^i}
$$
is expected to converge to $$\frac{\rho^{d/{d+r}}}{\int_\Omega \rho^{d/{d+r}}(y)dy}dx$$ as $N \to \infty$.

We now want to exchange the limits $t\to \infty$ and $N\to\infty$, and for this we need to take the limit in the ODE above as $N$ goes to infinity.
For this, we take a set of reference points $(\hat x^{1},\ldots,\hat x^N)$ and we parameterize a general family of $N$ points $x^i$ as the image of $\hat x^i$ via a slowly varying smooth map $X:\br^d\to \br^d$, that is
 $$
 x^i=X(\hat x^i).
 $$
In this way, the functional $F_{N,r}(x^{1},\ldots,x^{N})$ can be rewritten in terms of the map $X$, that is
 $$
 F_{N,r}(x^1,\ldots,x^N)=F_{N,r}\bigl(X(\hat x^{1}),\ldots,X(\hat x^{N})\bigr),
 $$
and (a suitable renormalization of it) should converge to
 a functional $\mathcal F[X]$. Hence, we can expect that the evolution of $x^i(t)$ for $N$ large
 is well-approximated by the $L^2$-gradient flow of $\mathcal F$.
Although this formal argument may look convincing,
already the one dimensional case is rather delicate.
In the next section, we review the results of \cite{CGI}.
%
%
%
\section{The 1D case}
The aim of this section is to describe the GF approach introduced above in the one dimensional case.
This case will already show several features of this problem. In particular we will need to study the dynamics of degenerate parabolic equations, and to use several refined estimates on stability of PDEs.

\subsection{The continuous functional}
With no loss of generality let $\Omega$ be the open interval $[0,1]$
and consider $\rho$ a smooth probability density on $\Omega.$
In order to obtain a continuous version of the functional
$$
F_{N,r}(x^{1}, \ldots, x^{N})=\int_{0}^1\underset{1\le i \le N}{\mbox{min}} | x^i-y |^r\rho(y)\,dy,
$$ 
with $0\le x^{1}\le \ldots \le x^{N}\leq 1$, assume that
$$
x^i=X\bigg(\frac{i-1/2}{N}\bigg), \qquad i=1, \ldots, N
$$
with $X: [0,1] \to [0,1]$ a smooth non-decreasing map such that $X(0)=0$ and $X(1)=1$. Then the expression for the minimum becomes
$$
\min_{1\le j\le N}|y-x^{j}|^r=\left\{ \begin{array}{ccc}
|y-x^i|^r & \mbox{for} \ y \in (x^{i-1/2}, x^{i+1/2}),\\
|y|^r & \mbox{for} \ y \in (0, x^{1/2}),\\
|y-1|^r & \mbox{for} \ y \in (x^{N+1/2}, 1),\\
\end{array}
\right.
$$
and $F_{N,r}$ is given by
\begin{multline*}
F_{N,r}(x^{1}, \ldots, x^{N})=\\
 \sum_{i=1}^N \int_{x^{i-1/2}}^{x^{i+1/2}} |y-x^i|^r\rho(y)dy+ \int_{0}^{x^{1/2}} |y|^r\rho(y)dy
+\int_{x^{N+1/2}}^{1} |y-1|^r\rho(y)dy.
\end{multline*}
Hence, by a Taylor expansion, we get
$$
F_{N,r}(x^{1}, \ldots, x^{N})= \frac{C_r}{N^r}\int_0^1 \rho(X(\te))|\pt_\te X(\te)|^{r+1}d\te+ {O}\Big(\frac{1}{N^{r+1}}\Big),
$$
where
$C_r=\frac{1}{2^r(r+1)}$ and ${O}\left(\frac{1}{N^{r+1}}\right)$ depends on the smoothness of $\rho$ and $X$
(for instance, $\rho \in C^1$ and $X \in C^2$ is enough).
Hence 
$$
N^rF_{N,r}(x^{1}, \ldots, x^{N}) \longrightarrow C_r\int_0^1 \rho(X(\te))|\pt_\te X(\te)|^{r+1}d\te:=\mc{F}[X]
$$
as $N\to \infty.$

By a standard computation, we obtain the gradient flow PDE for $\mc{F}$ for the $L^2$-metric,
\begin{multline}\label{flow1:gradient flow}
\pt_tX(t,\te)=C_r\Big((r+1)\pt_\te\big(\rho(X(t,\te))|\pt_\te X(t,\te)|^{r-1}\pt_\te X(t,\te)\big)\\
-\r'(X(t,\te))|\pt_\te X(t,\te)|^{r+1} \Big),
\end{multline}
coupled with the Dirichlet boundary condition
\begin{equation}
\label{flow1:eq:boundary}
X(t,0)=0, \qquad X(t,1)=1.
\end{equation}
\noindent 
{\bf Remark}: in the particular case $\rho \equiv 1,$ we get the $p$-Laplacian equation
$$
\partial_tX=C_r\,(r+1)\,\partial_\theta\big(|\partial_\theta X|^{r-1}\partial_\theta X\big)
$$
with $p-1=r$. Hence, in general, the gradient flow PDE for $\mc{F}$ is a degenerate parabolic equation.
More precisely, the degeneracy comes from the fact that the coefficient $|\partial_\theta X|^{r-1}$ appearing in the equation may vanish or go to infinity.
So a natural question becomes:
\smallskip

\noindent
\textbf{Degeneracy issue:} if $0<c_0\leq \d_\th X_0\leq C_0$, is a similar bound true for all times?

\smallskip
\noindent
Although the answer is easily seen to be positive for the case $\rho\equiv 1$ using that fact that $\partial_\theta X$ solves a ``nice'' equation, the question becomes much more delicate for a general $\rho$. In the next section we show how to give a positive answer to the degeneracy issue for a general class of densities $\rho$.

\subsection{An Eulerian formulation}
Define $f\equiv f(t,x)$ by 
$$
f(t,x)\,dx=X(t,\cdot)_\#d\theta,
$$
namely
$$
\int_0^1 \varphi(x)f(t,x)\,dx=\int_0^1\varphi(X(t,\th))\,d\th \qquad \mbox{for\, all}\, \, \,\varphi \in C^0([0,1]).
$$
Performing the change of variable $x=X(t,\theta)$ in the left hand side, the above identity gives (as long as $X(t,\th):[0,1]\to [0,1]$ is a diffeomorphism)
$$
\int_0^1 \varphi(X(t,\theta))f(t,X(t,\theta))\d_\th X(t,\th)\,d\theta=\int_0^1\varphi(X(t,\th))\,d\th \qquad \mbox{for\, all}\, \, \,\varphi \in C^0([0,1])
$$
from which we deduce (by the arbitrariness of $\varphi$)
$$
f(t,X(t,\th))=\frac1{\d_\th X(t,\th)}.
$$
Then, by a direct computation, we get
\begin{equation}
\label{eulerian}
\left\{
\ba
{}&\d_tf=-r\,C_r\,\d_x\left(f\d_x\left(\frac{\rho}{f^{r+1}}\right)\right)\,,\quad x\in\mathbb R
\\
&f(t,x+1)=f(t,x)
\ea
\right.
\end{equation}

\bigskip
\noindent
{\bf Remark}: if $\rho \equiv 1$ the Eulerian equation becomes
$$
\partial_tf=-C_r\,(r+1)\,\partial_x^2\big(f^{-r}\big)
$$
which is an equation of very fast diffusion type.
\\

Let us set $m:=\rho^{1/(1+r)}$ and $u:=f/m.$ Then the Eulerian quantization gradient flow equation becomes
\begin{equation}
\label{eq:u}
\d_tu=-\frac{(r+1)\,C_r}{m}\d_x\bigg(m\,\d_x\Big(\frac{1}{u^r}\Big) \bigg)\,.
\end{equation}
For the latter equation we can then prove the following comparison principle \cite[Lemma 2.1]{CGI}:

\begin{lem}
If $u>0$ is a solution of \eqref{eq:u} and $c >0$, then
$$
\frac{d}{dt}\int_0^1(u-c)_+(t,x)\,m(x)\,dx\le 0,
$$
$$
\frac{d}{dt}\int_0^1(u-c)_-(t,x)\,m(x)\,dx\le 0.
$$
\end{lem}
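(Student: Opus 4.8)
The plan is to prove the two inequalities by differentiating under the integral sign and using the structure of the equation \eqref{eq:u}. Consider the first inequality; the second is analogous (replacing $(u-c)_+$ by $(u-c)_-$, with an appropriate sign bookkeeping). First I would argue that, for a fixed constant $c>0$, the positive-part function $(\cdot-c)_+$ can be approximated by smooth nondecreasing convex functions $\phi_\eps$ with $\phi_\eps' \ge 0$ and $\phi_\eps'$ supported in $\{u>c\}$ in the limit; this makes the formal computation rigorous and avoids having to differentiate a non-smooth integrand directly. So the first step is to compute, for such a smooth $\phi$,
\[
\frac{d}{dt}\int_0^1 \phi(u)\,m\,dx = \int_0^1 \phi'(u)\,\d_t u\, m\,dx = -(r+1)C_r\int_0^1 \phi'(u)\,\d_x\Bigl(m\,\d_x\bigl(u^{-r}\bigr)\Bigr)\,dx,
\]
where I used the equation \eqref{eq:u} in the form $m\,\d_t u = -(r+1)C_r\,\d_x(m\,\d_x(u^{-r}))$.

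The key step is then an integration by parts in $x$. Using periodicity $f(t,x+1)=f(t,x)$ (equivalently $u(t,x+1)=u(t,x)$ since $m$ is periodic, being built from the periodic extension of $\rho$) the boundary terms vanish, and I obtain
\[
\frac{d}{dt}\int_0^1 \phi(u)\,m\,dx = (r+1)C_r\int_0^1 \phi''(u)\,\d_x u \cdot m\,\d_x\bigl(u^{-r}\bigr)\,dx = -r(r+1)C_r\int_0^1 \phi''(u)\, m\, u^{-r-1}\,(\d_x u)^2\,dx.
\]
Here I used $\d_x(u^{-r}) = -r\,u^{-r-1}\,\d_x u$. Now the integrand has a definite sign: $\phi'' \ge 0$ by convexity, $m>0$, $u>0$, and $(\d_x u)^2 \ge 0$, so the whole right-hand side is $\le 0$. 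Passing to the limit $\phi_\eps \to (\cdot-c)_+$ yields $\frac{d}{dt}\int_0^1 (u-c)_+\,m\,dx \le 0$.

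For the second inequality one runs the same argument with $\phi$ smooth, convex, nonincreasing, approximating $(\cdot-c)_-$ (so that $\phi'\le 0$ but still $\phi''\ge0$); the sign computation is identical and again the derivative is $\le 0$. I expect the main obstacle to be the justification of the limiting procedure and of differentiation under the integral: one must know enough regularity and positivity of the solution $u$ (in particular $u$ bounded away from $0$ and $\infty$, which is exactly the degeneracy issue flagged earlier) to make the integrals $\int \phi_\eps''(u)\,m\,u^{-r-1}(\d_x u)^2\,dx$ meaningful and to pass to the limit, and to control the boundary/periodicity terms. Once one works at the level of sufficiently smooth positive solutions (e.g. solutions obtained by approximation for which these estimates are available), the computation itself is just the two integrations by parts above and a sign check.
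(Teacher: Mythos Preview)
Your argument is correct and is the standard Kato/Stampacchia-type computation for this kind of comparison principle: multiply by a convex test function of $u$, integrate by parts using periodicity, and exploit that $\d_x u$ and $\d_x(u^{-r})$ have opposite signs. The paper does not actually supply a proof of this lemma --- it simply cites \cite[Lemma~2.1]{CGI} --- so there is nothing to compare in detail, but your proposal is precisely the argument one expects (and the one carried out in \cite{CGI}).
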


Thanks to this lemma, we deduce that the following implication holds for all constants $0<c_0\leq C_0$:
$$
c_0 \leq u(0,x) \le C_0\qquad\Rightarrow \qquad c_0 \leq u(t,x) \le C_0\qquad \mbox{for\, all}\, \, \,t \geq 0.
$$
Therefore, we obtain the following comparison princile:
\begin{cor}
\label{cor:monot}
Assume that $0<\lambda \leq \rho \leq 1/\lambda$ and $0<a_0 \leq \partial_\theta X(0)\leq A_0$.
Then there exist $0<b_0 \leq B_0$, depending only on $\lambda,a_0,A_0$, such that
$$
0<b_0 \leq \partial_\theta X(t)\leq B_0\qquad \mbox{for\, all}\, \, \,t \geq 0.
$$
\end{cor}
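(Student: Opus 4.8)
The plan is to transport the bound on $\partial_\theta X$ through the Eulerian variable $u$ and then invoke the comparison principle recorded just below the Lemma. The first thing I would set up is the dictionary between $\partial_\theta X$ and $u$: combining the identity $f(t,X(t,\theta))=1/\partial_\theta X(t,\theta)$ with $u=f/m$ and evaluating at $x=X(t,\theta)$ gives
$$
u\bigl(t,X(t,\theta)\bigr)=\frac{1}{m\bigl(X(t,\theta)\bigr)\,\partial_\theta X(t,\theta)}\,.
$$
Since $0<\lambda\le\rho\le 1/\lambda$ and $m=\rho^{1/(1+r)}$, we have $\lambda^{1/(1+r)}\le m\le \lambda^{-1/(1+r)}$, so — as long as $X(t,\cdot)$ is a diffeomorphism of $[0,1]$, hence onto — a two-sided bound on $\partial_\theta X(t,\cdot)$ on $[0,1]$ is equivalent, up to the explicit factor $\lambda^{\pm 1/(1+r)}$, to a two-sided bound on $u(t,\cdot)$ on $[0,1]$.

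Next I would translate the hypothesis at $t=0$ and run the comparison principle. Since $\partial_\theta X(0,\cdot)\ge a_0>0$, the map $X(0,\cdot)$ is an increasing diffeomorphism of $[0,1]$, so from the displayed identity at $t=0$ together with $a_0\le\partial_\theta X(0,\cdot)\le A_0$ one reads off
$$
c_0:=\frac{\lambda^{1/(1+r)}}{A_0}\ \le\ u(0,x)\ \le\ \frac{\lambda^{-1/(1+r)}}{a_0}=:C_0\qquad\text{for all }x\in[0,1].
$$
By the implication stated between the Lemma and this Corollary, these bounds propagate: $c_0\le u(t,x)\le C_0$ for all $t\ge 0$ and all $x\in[0,1]$. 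Plugging this back into the displayed identity and using $\lambda^{1/(1+r)}\le m\le\lambda^{-1/(1+r)}$ once more, I would conclude
$$
b_0:=a_0\,\lambda^{2/(1+r)}\ \le\ \partial_\theta X(t,\theta)\ \le\ A_0\,\lambda^{-2/(1+r)}=:B_0\,,
$$
with $b_0,B_0$ depending only on $\lambda,a_0,A_0$ (and the fixed exponent $r$); note $b_0\le B_0$ since $a_0\le A_0$ and $\lambda\le 1$.

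The hard part — the step where one must argue rather than merely bookkeep — is that this chain of equivalences is a priori circular: the identity $f(t,X)=1/\partial_\theta X$, the Eulerian equations \eqref{eulerian}--\eqref{eq:u}, and the regularity needed to justify the manipulations behind the Lemma all presuppose that $X(t,\cdot)$ is a diffeomorphism, i.e. $\partial_\theta X>0$, which is precisely what is to be shown. I would close this loop with a continuation argument: let $T^\star\in(0,+\infty]$ be the supremum of times up to which \eqref{flow1:gradient flow}--\eqref{flow1:eq:boundary} admits a sufficiently regular solution with $\partial_\theta X(t,\cdot)>0$ (so that $X(t,\cdot)$ is a diffeomorphism of $[0,1]$); on $[0,T^\star)$ the Eulerian picture is legitimate, so the steps above yield the \emph{uniform} bounds $b_0\le\partial_\theta X(t)\le B_0$ there. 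This uniform non-degeneracy, fed into the local well-posedness theory for the quasilinear degenerate equation, rules out a breakdown at $T^\star$ and forces $T^\star=+\infty$. Thus the only genuinely nontrivial ingredient, beyond the algebra above, is the local existence-and-regularity theory for \eqref{flow1:gradient flow}, which I would borrow from \cite{CGI}; granting that, the degeneracy bound is a soft consequence of the comparison principle.
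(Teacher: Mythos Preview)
Your proof is correct and follows essentially the same approach as the paper, which derives the corollary directly from the implication $c_0\le u(0,\cdot)\le C_0\Rightarrow c_0\le u(t,\cdot)\le C_0$ via the dictionary $u=1/(m\,\partial_\theta X)$. The paper is terser and does not spell out the explicit constants $b_0=a_0\lambda^{2/(1+r)}$, $B_0=A_0\lambda^{-2/(1+r)}$ or the continuation argument closing the well-posedness loop, but what you wrote is exactly the intended reasoning.
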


\medskip
{\bf Remark:}
The equation \eqref{eulerian} is a very fast diffusion equation that has an interest on its own.
In the paper \cite{I2} we investigated the asymptotic behavior of \eqref{eulerian} and its natural gradient flows structure in the space of probability measures endowed with the Wasserstein distance. 
By using this different approach, one can prove convergence results for  \eqref{eulerian} also in situations that are not covered by the results in \cite{CGI,CGI2}.
Using energy-entropy production techniques, one can prove exponential convergence to equilibrium under minimal assumptions on the data when the functional is not convex in the Wasserstein space. Also, by a detailed analysis of the Hessian of the functional, we can provide sufficient conditions for stability of solutions
with respect to the Wasserstein distance.

\subsection{Main result}
Our main result in \cite{CGI} shows that, under the assumptions that $r=2,$ $\|\rho-1\|_{C^2} \ll 1,$ and  that the initial datum is smooth and increasing, the discrete and the continuous gradient flows
remain {\it uniformly} close in $L^2$ for {\it all} times.
In addition, by entropy-dissipation inequalities for the PDE, we show that the continuous gradient flow
converges exponentially fast to the stationary state for the PDE, which is seen in Eulerian variables to correspond
to the measure $\frac{\rho^{1/3}\,d\theta}{\int \rho^{1/3}}$, as predicted by \eqref{flow1:close}.
We point out that the assumption $r=2$ is not essential, and it is imposed just to simplify some computations so as to emphasize the main ideas.

Our main theorem can be informally stated as follows (we refer to \cite{CGI} for the precise assumptions on the initial data):

\begin{thm}
Assume $r=2,$ $\|\rho-1\|_{C^2} \leq \bar \e$, and let $\big(x^1(t),\ldots,x^N(t)\bigr)$ be the gradient flow of $F_{N,2}$ starting from $\big(x^1_0,\ldots,x^N_0\bigr)$.
Under some suitable assumptions on
$\rho$ and the initial data,
the continuous and discrete GF remain quantitatively close for all times:
$$
\frac1N\sum_{i=1}^N\Big|x_i(N^3t)-X(t,\tfrac{i-1/2}{N})\Big|^2\le\frac{C'}{N^4}\,,\quad t\ge 0.
$$
\vspace{-0.2cm}

In particular

\vspace{-0.2cm}
$$
W_1\bigg(\frac1N\sum_i \delta_{x^i(t)},\frac{\rho^{1/3}\,d\theta}{\int \rho^{1/3}}\bigg) \leq \frac{ 2C'}{N} \qquad \mbox{for\, all}\, \, \,t \geq \frac{N^3\log N}{ c'}.
$$
\end{thm}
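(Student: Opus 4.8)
The first move is a parabolic time rescaling. Since $N^rF_{N,r}\to\mc F$ and the discrete analogue of the metric $\int_0^1(\cdot)^2\,d\te$ is $\tfrac1N\sum_i(\cdot)^2$, the Euclidean gradient flow $\dot x^i=-\pt_{x^i}F_{N,r}$, after the substitution $y^i(t):=x^i(N^{r+1}t)$ (that is, $x^i(N^3t)$ when $r=2$), becomes the \emph{renormalised} discrete gradient flow $\dot y^i=-N^{r+1}\pt_{x^i}F_{N,r}(y)$, which is the one expected to converge to the $L^2(d\te)$-gradient flow of $\mc F$, i.e. to \eqref{flow1:gradient flow}. Writing $\tilde x^i(t):=X(t,\tfrac{i-1/2}{N})$ for the continuous flow sampled at the reference points, and assuming (as in \cite{CGI}) $x^i_0=X(0,\tfrac{i-1/2}{N})$ with $X(0,\cdot)$ smooth and increasing, the quantity to control is $E_N(t):=\tfrac1N\sum_{i=1}^N|y^i(t)-\tilde x^i(t)|^2$, with $E_N(0)=0$. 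The goal is a differential inequality of the form $\tfrac{d}{dt}E_N\le-2\lambda\,E_N+(\text{source})$ that closes by Gronwall uniformly in $t$.

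\textbf{The two mechanisms.}
The source term is produced by two ingredients. (i) \emph{Consistency}: the sampled curve $\tilde x^i$ does not solve the renormalised discrete ODE, but the residual $\cR^i:=\pt_t\tilde x^i+N^{r+1}\pt_{x^i}F_{N,r}(\tilde x)$ is small; expanding the explicit discrete expression $N^{r+1}\pt_{x^i}F_{N,r}(\tilde x)$ in powers of $1/N$ around the continuum operator in \eqref{flow1:gradient flow} — where the midpoint sampling $\tfrac{i-1/2}{N}$ and the choice $r=2$ are used — yields $|\cR^i|=O(N^{-2})$ per particle, hence a source $\lesssim N^{-2}\sqrt{E_N}$, provided $X(t,\cdot)$ has uniform-in-time $C^2$ (or $C^3$) bounds. (ii) \emph{Stability}: one needs the renormalised discrete force to be \emph{approximately monotone}, i.e. $N^rF_{N,r}$ to be $\lambda$-convex with $\lambda>0$ independent of $N$ in the $\tfrac1N\sum(\cdot)^2$ metric in the region where the dynamics lives. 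This is where $\|\rho-1\|_{C^2}\le\bar\e$ enters: for $\rho\equiv1$ the functional is $C_r\int|\pt_\te X|^{r+1}\,d\te$, which is convex and, on $\{\pt_\te X\ge b_0\}$, uniformly so; the $O(\bar\e)$ density perturbation, and the passage to the discrete functional, are controlled perturbatively. Both (i) and (ii) require the uniform non-degeneracy $0<b_0\le\pt_\te X(t)\le B_0$ of Corollary~\ref{cor:monot} \emph{together with its discrete counterpart} $0<b_0'\le N(x^{i+1}(t)-x^i(t))\le B_0'$, so that $|\pt_\te X|^{r-1}$ and its discrete analogue stay comparable to constants; the uniform $C^2$ bound on $X$ then follows from parabolic regularity for the now uniformly parabolic equation \eqref{flow1:gradient flow}.

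\textbf{Closing the estimate and deducing the Wasserstein bound.}
Feeding (i)--(ii) into $\tfrac{d}{dt}E_N=\tfrac2N\sum_i\langle\dot y^i-\dot{\tilde x}^i,\,y^i-\tilde x^i\rangle$ gives $\tfrac{d}{dt}E_N\le-2\lambda E_N+CN^{-2}\sqrt{E_N}$, hence $\tfrac{d}{dt}\sqrt{E_N}\le-\lambda\sqrt{E_N}+\tfrac{C}{2}N^{-2}$ and $\sqrt{E_N(t)}\le\sqrt{E_N(0)}\,e^{-\lambda t}+\tfrac{C}{2\lambda}N^{-2}$; since $E_N(0)=0$ this is the first displayed bound with $C'=(C/2\lambda)^2$. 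For the second bound, set $\tau:=t/N^3$ and split by the triangle inequality for $W_1$:
\begin{multline*}
W_1\!\Big(\tfrac1N\sum_i\delta_{x^i(t)},\mu_\infty\Big)\le W_1\!\Big(\tfrac1N\sum_i\delta_{x^i(t)},\tfrac1N\sum_i\delta_{X(\tau,\frac{i-1/2}{N})}\Big)\\
+W_1\!\Big(\tfrac1N\sum_i\delta_{X(\tau,\frac{i-1/2}{N})},X(\tau,\cdot)_\#d\te\Big)+W_1\!\big(X(\tau,\cdot)_\#d\te,\mu_\infty\big),
\end{multline*}
where $\mu_\infty=\tfrac{\rho^{1/3}d\te}{\int\rho^{1/3}}$. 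The first summand is $\le\big(\tfrac1N\sum_i|x^i(N^3\tau)-X(\tau,\tfrac{i-1/2}{N})|^2\big)^{1/2}\le\sqrt{C'}\,N^{-2}$ by the first bound; the second is $\le B_0\,W_1\!\big(\tfrac1N\sum_i\delta_{(i-1/2)/N},d\te\big)=O(N^{-1})$ since $\te\mapsto X(\tau,\te)$ is $B_0$-Lipschitz and pushforward by a Lipschitz map is Lipschitz for $W_1$; and the third equals $\|X(\tau,\cdot)-X_\infty\|_{L^1(0,1)}\le\|X(\tau,\cdot)-X_\infty\|_{L^2}\le C e^{-c'\tau}$, using that the monotone transport map from $d\te$ is the quantile function and that the continuous gradient flow converges exponentially to its stationary state $X_\infty$ (with $(X_\infty)_\#d\te=\mu_\infty$), as proven in \cite{CGI} via entropy--dissipation inequalities. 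For $t\ge N^3\log N/c'$ one has $\tau\ge c'^{-1}\log N$, so the third summand is $\le C/N$; summing and relabelling the constant gives $W_1\le 2C'/N$.

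\textbf{Main obstacle.}
The genuine difficulty lies in the circularity inside the second paragraph: consistency and stability, hence the smallness of $E_N$, rely on the uniform non-degeneracy of \emph{both} flows and on uniform higher regularity of $X$, yet the cleanest route to the discrete non-degeneracy of $N(x^{i+1}-x^i)$ is precisely its closeness to the non-degenerate continuous flow. This is broken by a continuation (bootstrap) argument in time: one posits the non-degeneracy bounds on a maximal interval, runs the Gronwall estimate there with slightly worse constants, and shows the bounds are in fact strictly improved, so the interval is all of $[0,\infty)$. Carrying this out quantitatively, in tandem with the $N$-uniform approximate-convexity estimate for $N^rF_{N,r}$ — which forces the smallness hypothesis $\|\rho-1\|_{C^2}\le\bar\e$ and the simplification $r=2$ — is the technical heart of the proof.
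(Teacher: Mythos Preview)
Your proposal is correct and follows essentially the same strategy as the paper: the $O(N^{-2})$ consistency of the sampled continuous flow, the $\lambda$-convexity coming from $\|\rho-1\|_{C^2}\ll1$ together with non-degeneracy, and a bootstrap to break the circular dependence between discrete non-degeneracy and closeness are exactly the paper's Steps~1, 4, and 5. The only organizational difference is in how the bootstrap is run: the paper first proves a finite-time $L^\infty$ stability with linearly growing error $O((1+T)/N^2)$ (Step~2), uses it to transfer monotonicity on $[0,T]$ (Step~3), runs the $L^2$ Gronwall there (Step~4), and then iterates over windows $[kT,(k+1)T]$ (Step~5); you instead run the $L^2$ Gronwall directly on a maximal interval under the bootstrap hypothesis and extract the needed $L^\infty$ bound a posteriori from $E_N\le C'N^{-4}$ (which indeed gives $\max_i|y^i-\tilde x^i|\le\sqrt{C'}N^{-3/2}\ll 1/N$), closing the continuation for $N$ large. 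Both mechanisms are equivalent here.
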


We now give a quick overview of the proof of this result, and we refer the reader to \cite{CGI} for a detailed proof.

\begin{proof}[Strategy of the proof]
As we shall explain, the proof in the case $\rho\not\equiv 1$ is more involved than the case $\rho\equiv 1$.
We begin with the simpler case $\rho\equiv 1$.

   \smallskip

\noindent
{\it $\bullet$ The case $\rho\equiv 1$.}
In this situation the $L^2$-GF of $\mathcal F$ depends on $\partial_\theta X$ and $\partial_{\theta\theta}X$,
but not on $X$ itself. 
By a discrete maximum principle for the incremental quotients, we can show that the discrete monotonicity estimate 
$$
\frac{c_0}{N}\leq x^{i+1}(t)-x^i(t)\leq {C_0}{N} \qquad \mbox{for\ all}\, \,i
$$
holds for all times, provided it is satisfied at time $0$.
Thanks to this information, 
we can perform a Gronwall-type argument on the quantity
$$
\frac1N\sum_{i=1}^N\Big|x_i(N^3t)-X(t,\tfrac{i-1/2}{N})\Big|^2,
$$
and this allows us to prove that the 
discrete and the continuous gradient flows remain uniformly close in $L^2$ for {\it all} times.

   \smallskip
\noindent
{\it $\bullet$ The case $\rho\not\equiv 1$.}
This case is more delicate because there is no clear way to show the validity of 
the discrete monotonicity estimate, so the approach for the case $\rho\equiv 1$  completely fails.
To circumvent this, we implement a bootstrap argument 
   that combines a finite-time stability in $L^\infty$ with $L^2$ exponential convergence. 
   This is roughly described in the next 5 steps.
   
   \smallskip
  \noindent
  {\bf Step 1:} We show that
   $$
   \hat X(t):=\Bigl(X\Big(t,\tfrac{1/2}{N}\Big), \ldots, X\Big(t,\tfrac{N-1/2}{N}\Big)\Bigr)
   $$
   solves the discrete gradient flow equation up 
   to an error of order $1/N^2$.

      \smallskip
  \noindent
   {\bf Step 2:} We prove that the discrete and continuous gradient flows stay $1/N^2$-close on
   a finite interval of time, namely
   $$
   \Big|x^i(N^3t)-X(t,\tfrac{i-1/2}{N})\Big| =O\bigg(\frac{1+T}{N^2}\bigg)\qquad\mbox{for\, all}\, \, \,i,\,\, \mbox{for\, all}\, \, \,t \in[0,T].
   $$
          
             \smallskip
  \noindent
   {\bf Step 3:} By Step 2, we are able to transfer the discrete monotonicity estimate from $X(t,\tfrac{i}{N})$
   to $x^i(N^3t)$ on $[0,T]$. More precisely, it follows by Corollary \ref{cor:monot} that
   $$
\frac{b_0}{N} \leq  X(t,\tfrac{i+1/2}{N})-X(t,\tfrac{i-1/2}{N})\leq \frac{B_0}{N}\qquad\mbox{for\, all}\, \, \,i,\,\, \mbox{for\, all}\, \, \,t \in[0,T],
   $$
   so a triangle inequality yields 
     $$
\frac{b_0}{2N} \leq  x^{i+1}(t)-x^i(t)\ \leq \frac{2B_0}{N}\qquad\mbox{for\, all}\, \, \,i,\,\, \mbox{for\, all}\, \, \,t \in[0,T],
   $$
   provided $T$ is bounded and $N$ is sufficiently large.
   
                \smallskip
  \noindent
   {\bf Step 4:} Thanks to the monotonicity bound established in Step 3, as in the case $\rho\equiv 1$
   we are now able to perform a Gronwall argument in $L^2$ to deduce that 
   $$
   t\mapsto \frac1N\sum_{i=1}^N\Big|x^i(N^3t)-X(t,\tfrac{i-1/2}{N})\Big|^2
   $$
   decrease exponentially in time on $[0,T]$. For this step, the assumption $\|\rho-1\|_{C^2} \ll 1$ is crucial (see also Section \ref{sec:counterex} below).
   
                 \smallskip
  \noindent
   {\bf Step 5:} This is the key step: choosing $T$ carefully,  for $N$ large enough, the exponential gain from Step 4 allows us to iterate the argument above starting from time $T$ instead of $0$, and obtain the previous estimates on $[T,2T].$ Iterating infinitely many times, this concludes the proof.
\end{proof}
 
\subsection{On the assumptions  $\|\rho-1\|_{C^2} \ll 1$}
\label{sec:counterex}
As we have seen in the previous section, we have been able to prove the closeness of the discrete and continuous gradient flow, together with an exponential stability estimate, 
under the assumption $\|\rho-1\|_{C^2} \ll 1$.
The aim now is to show that the hypothesis $\|\rho-1\|_{C^2} \ll 1$ is {\it necessary} to ensure the convexity of $\mathcal F$ (and therefore to hope to obtain $L^2$-stability).

It will be convenient to specify the dependence of $\mathcal F$ on $\rho$, so we denote
$$
\mathcal F_\rho(X):=\int_0^1 \rho(X)\,|\partial_\theta X|^3\,d\theta.
$$
We begin by computing the Hessian of $\mathcal F_\rho$

Assume $\lambda \leq \rho \leq \frac{1}{\lambda}$, and
let $X,Y \in L^2([0,1])$ with $0\leq c \leq \partial_\theta X\leq C$ and
$|\partial_\theta Y|\leq C$.
Note that, to ensure that $(X+s Y)(0)=0$ and $(X+s Y)(1)=1$ for all $s$ small, we need to 
assume that 
$$
X(0)=0,\qquad X(1)=1,\qquad Y(0)=0,\qquad Y(1)=0.
$$
Then
\begin{align*}
D^2\mathcal F_\rho[X](Y,Y)&=\frac{d^2}{ds^2}|_{s=0}\mathcal F_\rho[X+s Y]\\
&= 6\int_0^1 \rho(X)\,\pt_\te X\,(\pt_\te Y)^2\,d\te\\
& + 6 \int_0^1 \rho'(X)\,(\pt_\te X)^2\,(\pt_\te Y)\,Y\,d\te
+ \int_0^1 \rho''(X)\,(\pt_\te X)^3\,Y^2\,d\te.
\end{align*}
To build a counterexample, we consider $X(t, \te)=\te$.
By the formula for the Hessian above, we see that for any smooth density 
$\bar \rho$ and for any smooth function $Y$,
\begin{align*}
D^2\mathcal F_{\bar\rho}(X)[Y,Y]= 6\int_0^1\bar \rho\,(\pt_\te Y)^2\,d\te
+6\int_0^1\bar \rho'\,\pt_\te Y\,Y\,d\te+\int_0^1\bar \rho''\,Y^2\,d\te.
\end{align*}
Integrating by parts we have
\begin{align*}
D^2\mathcal F_{\bar\rho}(X)[Y,Y]& =6\int_0^1\bar \rho\,(\pt_\te Y)^2\,d\te-6\int_0^1\bar \rho\,(\pt_\te Y)^2-6\int_0^1\bar \rho\, \pt^2_\te Y\, Y\, d\te\\
&\qquad+2\int_0^1\bar \rho\bigg[(\pt_\te Y)^2+\pt^2_\te Y\, Y  \bigg]\, d\te\\
&=2\int_0^1\bar \rho\,(\pt_\te Y)^2\,d\te-4\int_0^1 \bar \rho\,\pt^2_\te Y\, Y\, d\te.
\end{align*}
We now fix $\e\in (0,1/8)$ to be a small number and define
$$
\bar{\rho}(\te):= \left\{ \begin{array}{cc} 1 & \mbox{for}\ \te \in \[\frac{1}{2}-\e,  \frac{1}{2}+\e\]\\
0 & \mbox{for}\ \te \in [0,1] \setminus \[\frac{1}{2}-\e,  \frac{1}{2}+\e\].
\end{array} \right.
$$
Also, let $Y(t, \te)$ be a Lipschitz function, compactly supported in $(0,1)$, that
is smooth on $(0,1/2)\cup(1/2,1)$ and coincides with $|\te-\frac{1}{2}|+1$ in $\[\frac{1}{2}-\e,  \frac{1}{2}+\e\].$

Since $\bar\rho$ and $Y$ are not smooth, we first extend both of them by periodicity to the whole real line and define $\rho_{\delta}:= \bar \rho\, *\, \vp_{\delta}$ and $Y_\delta:=Y\,*\,\vp_{\delta},$ with 
$$
\vp_{\delta}(\te)= \frac{\exp ^{-\frac{|\te|^2}{2\delta}}}{\sqrt{2\pi \delta}}.
$$
Then
$$
D^2\mathcal F_{\rho_\delta}(X)[Y_{\delta},Y_{\delta}] =2\int_0^1 \rho_{\delta}\,(\pt_\te Y_{\delta})^2\,d\te-4\int_0^1 \rho_{\delta}\,\pt^2_\te Y_{\delta}\, Y_{\delta}\, d\te.
$$
Noticing that
$$
\rho_\delta \to \bar \rho\quad \text{in } L^1,\qquad
 \rho_\delta \to 1 \quad \text{uniformly in }[1/2-\e/2,1/2+\e/2],
 $$
 $$
 Y_\delta \to Y  \quad \text{uniformly},\qquad
\pt_\te Y_{\delta}\to \pt_\te Y \quad \text{a.e.},\qquad
\pt^2_\te Y_{\delta}
\rightharpoonup 2\delta_{1/2},
$$
we see that
$$
D^2\mathcal F_{\rho_\delta}(X)[Y_\delta,Y_\delta] \to  
2\int_{\frac{1}{2}-\e}^{\frac{1}{2}+\e}(\pt_\te Y)^2\,d\te-8Y\(\frac{1}{2}\)=4\e-8< 0 \qquad \mbox{as} \  \delta\to 0.
$$
In particular, by choosing $\delta>0$ sufficiently small, we have obtained that the Hessian of $\mathcal F_{\rho_\delta}$ in the direction $Y_\delta$ is negative when $X(\te)=\te$ and $\rho_{\delta}\in C^{\infty}([0,1])$ and satisfies $1\ge \rho_{\delta}>0$.

\section{The 2D case}
Our goal now is to extend the result described above to higher dimensions.
 As a natural first step, we consider the two-dimensional setting.
 The main advantage in this situation is that optimal configurations are known to be asymptotically triangular lattices \footnote{The vertices of the triangular lattice are the centres of a hexagonal tiling.} \cite{GFT,FT,Gr99, Gr,M}. 
Hence, it looks natural to use the vertices of these lattices 
as the reference points $\hat x^i$ used to parameterize our starting configurations.
In this way we obtain a limiting functional $\mathcal F$ that involves not only $\nabla X$
but also its determinant.
Unfortunately, at present there is no general theory for gradient flows of functionals involving the determinant (this is actually
a major open problem in the field). Moreover, as we shall see, our functional depends in a singular way on the determinant, so it cannot be a convex functional. For this reason, we shall consider initial configurations that are small perturbations of the hexagonal lattices and perform a detailed analysis of the linearized equation.
Combining this with some general $\epsilon$-regularity theorems for parabolic systems,
we prove that the nonlinear evolution is governed by the linear dynamics, and in this way we can prove exponential convergence to the hexagonal configurations.

\subsection{Setting of the problem}
To state our main result, let us consider a regular hexagonal Voronoi tessellation $\scrL$ of the Euclidean plan $\bR^2$ with sides of length $1$. 
We consider the triangular regular lattice 
$$
\scrL:=\bZ e_1\oplus\bZ e_2\,,\quad e_1:=(1,0)\,,\quad e_2:=(\tfrac12;\tfrac{\sqrt{3}}2),
$$
and we note that the Voronoi cells for the points in $\scrL$ are regular hexagons.
To increase the number of points, we consider its dilations
$$
\eps\scrL,\qquad \eps>0.
$$
Let 
$$
\Pi:=\{a e_1+b e_2\,:\,|a|\leq 1/2,\,|b| \leq 1/2\}
$$
 be a fundamental domain, and observe that the periodicity of $\Pi$ and $\eps \scrL$ are compatible for any $\eps=1/n$.

To modify the regular hexagonal lattice, we look
at $\Pi$-periodic deformations of $\eps\scrL$ (see Figure \ref{esagoni})
$$
X(\eps\scrL)\,,\quad \eps=1/n\,,\,\,n\in\mathbb N,
$$
where $X\in\hbox{Diff}(\bR^2)$ satisfies
\begin{align*} 
X \quad \text{is }\Pi\text{-periodic},\qquad \|X-{\rm id}\|_{L^\infty}\ll 1.
\end{align*} 
\begin{figure}[h!]
\center
\includegraphics[scale=0.6]{esagoni.pdf}
\caption{$\Pi$-periodic deformations of $\eps\scrL$}
\label{esagoni}
\end{figure}

Note that, up to a translation, we can assume that
$$
\int_{\Pi}X\,dx=\int_{\Pi}{\rm id}\,dx=0.
$$
Our goal is to compute the energy $\mathcal F$ of $X$ as $\eps=1/n \to 0$, and prove that,
under the gradient flow of $\cF$, the limit of the near-hexagonal Voronoi tesselation of $X(\scrL/n)$ converges to the regular hexagonal tesselation.

\subsection{The continuous functional}
Let $(x_1^n,\ldots,x_N^n)=X(\scrL/n)\cap \Pi$ and consider the functional $F_{N,2}(x_1^n,\ldots,x_N^n)$.
By a geometric argument and a delicate computation, we show that\footnote{Note that this corresponds to the quantization of $\rho\equiv 1$ with $d=r=2$ for $N\approx n^2\to\infty$.}
$$
F_{N,2}(x_1^n,\ldots,x_N^n)\approx \frac{1}{n^4}\cF[X],
$$
where
$$
\mathcal F[X]=\int_\Pi F(\nabla X)\,dx,
$$
and, for each $M\in M_2(\bR)$ ,
$$
F(M)=\tfrac13\sum_{\om\in\{e_1,e_2,e_{12}\}}|M\cdot\om|^4\Phi(\om,M)\Big(3+\Phi(\om,M)^2\Big)
$$
with
$$
\Phi(\om,M):=\sqrt{\frac{|MR\om|^2|MR^T\om|^2}{\tfrac34{\rm det}(M)}-1}
$$
for each $\om\in\bS^2$, and
$$
R:=\left(\begin{matrix}\tfrac12\,\,&-\tfrac{\sqrt{3}}2\\ \\ \tfrac{\sqrt{3}}2&\tfrac12\end{matrix}\right),
$$
$$
e_1=(1,0),\quad e_2=Re_1,\quad e_{12}=R^{-1}e_1=e_1-e_2.
$$
Hence the gradient flow is given by 
$$
\d_tX(t,x)={\rm div}\big(\grad F(\grad X(t,x))\big)
$$
with initial and boundary conditions
$$
\left\{
\begin{array}{ll}&X(t) \quad \text{is }\Pi\text{-periodic},
\\
&X(0)=X^{in}.
\end{array} 
\right.
$$
Particularly useful for our analysis is the following more manageable formula:
\begin{align*}
F(M)&:=\frac{1}{16\sqrt{3}}\det(M) \tr[M^TM(2S-I)]\\
&\quad+\frac{1}{64\sqrt{3}}\frac{[\tr(M^TM)]^2[\tr(M^TMS)]}{\det(M)}\\
&\quad\quad-\frac{1}{192\sqrt{3}}\frac{[\tr(M^TM)]^3 +4[\tr(M^TMS)]^3}{\det(M)},
\end{align*}
where
$$
S=\left(\begin{array}{ll}
1 & 0 \\
0 & -1
\end{array}\right).
$$
Note that $F$ depends on $\det (M)$, and blows up as $\det (M)\to 0$.
In particular this implies that $F$ cannot be convex.

\subsection{The small deformation regime}
\label{sec:small}
As mentioned in the introduction, there is no existence theory for gradient flows depending in a singular way on the determinant.
For this reason, it makes sense to focus on a perturbative regime. Hence we write $X={\rm id}+\tau Y$ with $|\tau|\ll 1$, and compute 
\begin{align*}
3\sqrt{3}&\,F({\rm Id}+\tau \nabla Y)=10+{ 20\,\tau\,\rm div}(Y)
\\
&+\tau^2({ 14\,{\rm det}(\nabla Y)}+10\,{\rm div}(Y)^2+3\,|\nabla Y|^2)+O(\tau^3).
\end{align*}
We note that, by the expansion above, one can see that the function $F:\mathbb R^2\times \mathbb R^2\to \mathbb R$ is not convex.
Luckily the following crucial fact holds as a consequence of the fact that $Y$ is periodic:
$$
\int_\Pi {\rm div}(Y)=\int_\Pi {\rm det}(\nabla Y)=0.
$$
Thus, if we set 
$$
F_0(A)=F(A)-\tfrac{20}{3\sqrt{3}}{\rm Tr}(A-{\rm Id})-\tfrac{14}{3\sqrt{3}}{\rm det}(A-{\rm Id}),
$$
then $F_0$ is uniformly convex if $|A-{\rm Id}|\leq \eta\ll 1.$

As a consequence of these two facts, we deduce that $\cF[X]$ can be rewritten as
\begin{equation}
\label{eq:F0}
\cF[X]=\int_\Pi F_0(\grad X)\,dx,
\end{equation}
and $\cF$ is uniformly convex on functions that are sufficiently close to the identity in $C^1$.

\subsection{Main result}

Our main theorem shows that the hexagonal lattice is asymptotically optimal and dynamically stable:

\begin{thm}
\label{thm:main}
Consider an initial datum such that 
$$
\int_{\Pi}X(0)\,dx=0,
\qquad
\|X(0)-{\rm id} \|_{W^{\sigma,p}(\Pi)}\le \eps_0,
$$
with $p>2,$ and $1+2/p<\sigma$.
Assume that $\eps_0$ is small enough.
Then the gradient flow of $\cF$ exists, is unique, and converge exponentially fast to the identity map, that is
$$
\|X(t)-{\rm id}\|_{L^2}\le\|X(0)-{\rm id}\|_{L^2}e^{-\mu t}.
$$
for some $\mu>0$.
\end{thm}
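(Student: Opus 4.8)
The plan is to treat the gradient flow $\partial_t X=\mathrm{div}\big(\grad F(\grad X)\big)$ as a quasilinear parabolic system and to exploit the uniform convexity of $F_0$ near the identity together with the null-Lagrangian structure recorded in \eqref{eq:F0}. First I would set $Y:=X-\mathrm{id}$ and record two conservation facts: $\int_\Pi Y(t)\,dx=0$ for all $t$ (differentiate under the integral, use periodicity and the divergence form), and $\int_\Pi \mathrm{div}\,Y=\int_\Pi\det\grad Y=0$ for every $\Pi$-periodic $Y$. Consequently, along the flow $\cF[X]=\int_\Pi F_0(\grad X)\,dx$ up to an additive constant, and since $\grad F(\grad X)$ and $\grad F_0(\grad X)$ differ by $\tfrac{20}{3\sqrt3}I+\tfrac{14}{3\sqrt3}\mathrm{cof}(\grad Y)$ — a constant plus a term whose divergence vanishes by the Piola identity — the gradient flow of $\cF$ coincides with $\partial_t X=\mathrm{div}\big(\grad F_0(\grad X)\big)$.

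For local existence and uniqueness I would use that $\sigma>1+2/p$ gives $W^{\sigma,p}(\Pi)\hookrightarrow C^{1,\al}(\Pi)$ for some $\al>0$, and that $F_0$ is smooth and uniformly convex on a neighborhood $\{|M-\mathrm{Id}|\le\eta\}$; hence the system is uniformly parabolic as long as $\|\grad X(t)-\mathrm{Id}\|_{L^\infty}\le\eta$. Standard quasilinear parabolic theory on the torus (maximal regularity, or analytic semigroups in $W^{\sigma,p}$) then yields a unique maximal solution, which remains in $W^{\sigma,p}$ and is smooth for $t>0$.

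The core is a continuation/bootstrap estimate ensuring the solution never leaves the convexity neighborhood. Let $T^\ast:=\sup\{T:\|\grad X(t)-\mathrm{Id}\|_{L^\infty}\le\eta \text{ on }[0,T]\}$. On $[0,T^\ast)$ the basic energy identity reads
$$
\frac{d}{dt}\,\tfrac12\|Y(t)\|_{L^2}^2=\langle Y,\mathrm{div}(\grad F_0(\grad X))\rangle=-\int_\Pi\big\langle\grad Y,\ \grad F_0(\mathrm{Id}+\grad Y)-\grad F_0(\mathrm{Id})\big\rangle\,dx\le-c\,\|\grad Y\|_{L^2}^2,
$$
using uniform convexity of $F_0$ and that $\grad F_0(\mathrm{Id})$ is constant while $Y$ is periodic and mean-zero; the Poincar\'e inequality on $\Pi$ then gives $\le-\mu\|Y\|_{L^2}^2$, hence $\|Y(t)\|_{L^2}\le\|Y(0)\|_{L^2}e^{-\mu t}$ on $[0,T^\ast)$. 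It remains to upgrade this $L^2$ decay to a uniform bound on $\|\grad Y(t)\|_{L^\infty}$ strictly below $\eta$. Here I would combine the smoothing and spectral gap of the linearized semigroup $e^{t\mathcal L}$, $\mathcal L Y=\mathrm{div}\big(D^2F_0(\mathrm{Id})[\grad Y]\big)$ — a constant-coefficient, self-adjoint, strongly elliptic operator on mean-zero $\Pi$-periodic fields, hence mapping $L^2\to W^{\sigma,p}$ with norm $\lesssim e^{-\mu_0 t}\,t^{-\theta}$ for a suitable $\theta<1$ — with the Duhamel representation $Y(t)=e^{t\mathcal L}Y(0)+\int_0^t e^{(t-s)\mathcal L}\,\mathrm{div}\,R(\grad Y(s))\,ds$, where $R(M)=\grad F_0(\mathrm{Id}+M)-\grad F_0(\mathrm{Id})-D^2F_0(\mathrm{Id})[M]=O(|M|^2)$. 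Feeding in the exponential decay together with the $\eps$-regularity theorems for parabolic systems mentioned in the introduction, one bounds $\sup_{[0,t]}\|\grad Y\|_{L^\infty}$ by a universal multiple of $\eps_0$ plus a superlinear term in $\sup_{[0,t]}\|\grad Y\|_{L^\infty}$; a continuity argument then forces $\|\grad Y(t)\|_{L^\infty}\le\tfrac12\eta$ for all $t<T^\ast$, so $T^\ast=\infty$ and the stated estimate holds for all times.

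The main obstacle is precisely this last step. For genuine systems there is no De Giorgi--Nash--Moser theory, so propagating $L^2$ smallness into a uniform $C^1$ bound cannot be done by elementary maximum-principle arguments and instead requires the $\eps$-regularity machinery for parabolic systems, carefully matched with the decay rate $\mu_0$ furnished by the spectral gap of $\mathcal L$. A secondary technical point is to verify that $W^{\sigma,p}$ with $p>2$ and $\sigma>1+2/p$ is simultaneously a space in which $F_0$ acts well and local well-posedness holds, and one reachable by the smoothing of $e^{t\mathcal L}$ from $L^2$ with time-integrable norms, so that the Duhamel loop closes.
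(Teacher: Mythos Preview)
Your proposal is correct and hits all the essential ingredients---the null-Lagrangian reduction to $F_0$, the Sobolev embedding giving initial $C^{1,\alpha}$ control, the energy identity yielding exponential $L^2$ decay, and the need for parabolic $\eps$-regularity to upgrade this to a uniform gradient bound. The route, however, differs from the paper's in one organizational point. You work directly with the gradient flow of $\cF$ (equivalently of $F_0$), establish local existence by quasilinear theory, and then run a continuation argument: define $T^\ast$, prove decay on $[0,T^\ast)$, bootstrap via Duhamel and $\eps$-regularity to force $T^\ast=\infty$. The paper instead first \emph{extends} $F_0$ to a globally smooth, globally uniformly convex integrand $G_0$ agreeing with $F_0$ on $\{|A-{\rm Id}|\le\eta/2\}$, and runs the gradient flow of $\cG[X]=\int_\Pi G_0(\grad X)$. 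For this auxiliary flow, existence, uniqueness, and exponential $L^2$ convergence to ${\rm id}$ are immediate from the standard theory of uniformly convex gradient flows---no continuation argument is needed. The $\eps$-regularity step is then used only \emph{a posteriori}, combined with the already-established global $L^2$ decay, to show $\|\grad Y(t)-{\rm Id}\|_\infty\le\eta/4$ for all $t$, whence the $\cG$-flow never leaves the region where $G_0=F_0$ and is therefore the $\cF$-flow. What the paper's device buys is a cleaner separation of concerns: global well-posedness and decay come for free from convexity, and the delicate regularity machinery is invoked once, with decay already in hand, rather than inside a bootstrap loop where decay is only known up to $T^\ast$. Your approach is logically sound but carries slightly more bookkeeping, since local existence, decay, and the $C^1$ upgrade must be intertwined in the continuity argument.
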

\begin{proof}[Strategy of the proof]
We begin by recalling that $\mathcal F$ can be rewritten as \eqref{eq:F0}, where $F_0$ is uniformly convex in a neighborhood $B_\eta({\rm Id})$ of the identity matrix.

   \smallskip
  \noindent
{\bf Step 1:} 
Let $G_0:\mathbb R^2\times \mathbb R^2\to \mathbb R$ be a smooth uniformly convex function such that
$$
G_0(A)=F_0(A)\qquad \mbox{for\, all}\, \, \,A\text{ s.t.}\,|A-{\rm Id}|\leq \eta/2,
$$
and define
$$
\mathcal G[X]:=\int_\Pi F_0(\grad X)\,dx.
$$
Hence $\mathcal G$ is a convex functional $\mathcal G$ that coincides with $\mathcal F$ on maps that are $C^1$-close to the identity.

   \smallskip
  \noindent
{\bf Step 2:} Since $G$ is convex, there exists a unique gradient flow $Y(t)$ for $\mathcal G$.
Also, again by the standard theory for convex gradient flows, $Y(t)$ converges 
exponentially fast in $L^2$ to ${\rm id}$.


   \smallskip
  \noindent
{\bf Step 3:} 
By the Sobolev regularity on the initial datum and propagation of regularity for short times, we show that
$$
\|\nabla Y(t)-{\rm Id}\|_\infty \leq \eta/4\qquad \mbox{for\, all}\, \, \,t \in [0,t_0]
$$
for some $t_0>0$ small.

   \smallskip
  \noindent
{\bf  Step 4:} Since the gradient flow of $\mathcal G$ is a system,
there is no regularity theory as for classical parabolic equations. Hence, we cannot automatically guarantee that $Y(t)$ is smooth.
To circumvent this difficulty, we exploit the $L^2$ exponential convergence of $Y(t)$ to ${\rm id}$ 
with a delicate $\epsilon$-regularity theorem for parabolic systems in order to show that
$$
\|\nabla Y(t)-{\rm Id}\|_\infty \leq \eta/4 \qquad \mbox{for\, all}\, \, \,t \geq t_0.
$$

   \smallskip
  \noindent
{\bf  Step 5:} Combining Steps 3 and 4 we obtain that 
$$
\|\nabla Y(t)-{\rm Id}\|_\infty \leq \eta/4\qquad \mbox{for\, all}\, \, \,t \geq 0.
$$
Recalling the definition of $\mathcal G$ (see Step 1), this
implies that $\mathcal G=\mathcal F$ in a neighborhood of  $Y(t)$ for all $t\geq 0$, hence
$Y(t)$ {\it is} the gradient flow for $\mathcal F$, and the desired exponential convergence holds.
\end{proof}

\begin{figure}[h!]
\center
\includegraphics[width=5cm]{figure1ema}
\caption{720 points at time 0}
\label{F1_emanuele}
\bigskip

\includegraphics[width=5cm]{figure2ema}
\caption{720 points after 19 iterations}
\label{F2_emanuele}
\bigskip

\includegraphics[width=5cm]{figure3ema}
\caption{720 points after 157 iterations}
\label{F3_emanuele}
\end{figure}

Moreover, our numerical simulations confirm the asymptotic optimality of the hexagonal lattice as the number of points tends to infinity (see Figures \ref{F1_emanuele}, \ref{F2_emanuele}, and \ref{F3_emanuele}). 
Notice that, in Figures \ref{F1_emanuele}, \ref{F2_emanuele}, and \ref{F3_emanuele} the coloured polygons are hexagons.
In Figure \ref{F3_emanuele} it is shown that the minimizers have some small $1$-dimensional defects with respect to the hexagonal lattice. This is due to the fact that the boundary conditions in the simulation are not periodic and on the fact that the hexagonal lattice is not the global minimizer for a finite number $N$ of points.

\newpage
\section{What happens on Riemannian manifolds?}
As described in the introduction, the static version of the quantization problem in $\mathbb R^d$ is well understood.
The aim of this question is to understand what happens when $\mathbb R^d$ is replaced by a Riemannian manifold.
%

In this section we briefly present the results obtained in \cite{I1}.
Our results display how geometry can affect the optimal location problem.

\subsubsection{Main results}
While on compact manifolds one can prove \eqref{flow1:close} by using a suitable \emph{localization argument} (see \cite{I1, B}), the situation is very different when the manifold is \emph{non-compact}. Indeed, some global hypotheses on the behavior of the measure at ``infinity'' have to be imposed. The new growth assumption depends on the curvature of the manifold  and reduces, in the flat case, to a moment condition. We also build an example showing that our hypothesis is sharp.

\smallskip

To state the result we need to introduce some notation:
given a point $x_0 \in \mathcal M$, we can 
consider polar coordinates $(R,\vartheta)$ on $T_{x_0}\mathcal M \simeq \mathbb R^d$ induced by the constant metric $g_{x_0}$,
where $\vartheta$ denotes a vector on the unit sphere $\mathbb S^{d-1}$.
Then, we can define the following quantity that measures the size of the differential of the exponential map when restricted to a sphere
$\mathbb S^{d-1}_R\subset T_{x_0}\mathcal M$:
\begin{equation}
A_{x_0}(R):=R \,\sup_{v\in \mathbb S^{d-1}_R,\,w\in T_{v}\mathbb S^{d-1}_R,\, |w|_{x_0}=1} \Bigl| d_v \exp_{x_0}(w)\Bigr|_{\exp_{x_0}(v)},
\end{equation}
The result on non-compact manifolds reads as follows:
\begin{thm}
\label{thm:noncpt}
Let $(\mathcal M, g)$ be a complete Riemannian manifold, and 
 let $\mu=\rho \,d {\rm vol}$ be a probability measure on $\mathcal M$.

Assume that there exist $x_0 \in \mathcal M$ and $\delta>0$ such that
 \be
 \label{eq:momento}
 \int_{\mathcal M} d(x,x_0)^{r+\delta}\,d\mu(x)+  \int_{\mathcal M} A_{x_0}\bigl({d(x,x_0)}\bigr)^r  \,d\mu(x)<\infty,
\ee
and let $x^{1}, \ldots, x^{N}$ minimize the functional $F_{N,r}: (\mathcal M)^N \to \mathbb{R}^+.$ 
Then \eqref{flow1:close} holds.
\end{thm}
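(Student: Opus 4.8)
The plan is to reduce the non-compact case to a "localized" comparison with the Euclidean (or compact) result via an exhaustion argument, where the decay hypothesis \eqref{eq:momento} is exactly what controls the mass lost at infinity. First I would fix the point $x_0$ furnished by the hypothesis and, for $\Lambda>0$, split $\mathcal M$ into the geodesic ball $B_\Lambda:=B(x_0,\Lambda)$ and its complement. The point of the quantity $A_{x_0}(R)$ is that it measures how much the exponential map $\exp_{x_0}$ distorts spheres: on $B_\Lambda$ one can push the problem forward to $T_{x_0}\mathcal M\simeq\mathbb R^d$ through $\exp_{x_0}$, and the cost $d(x,x_0)^r$ together with the measure $\rho\,d{\rm vol}$ transforms into a Euclidean quantization problem for a density whose $r$-th moment is controlled by $\int A_{x_0}(d(\cdot,x_0))^r\,d\mu$. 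So the first substantive step is: show that the restriction of the problem to $B_\Lambda$ is, up to errors that are uniformly small in $\Lambda$ once $\Lambda$ is large (here the first term in \eqref{eq:momento} enters), equivalent to a Euclidean quantization problem to which one may apply \cite[Chapter 2, Theorem 7.5]{GL}, i.e. the asymptotics \eqref{flow1:close} with $\rho$ replaced by the pushed-forward density.

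Second, I would run the standard two-sided bound for the quantization error. For the upper bound on $F_{N,r}$ one takes a near-optimal configuration for the localized problem (which exists by the Euclidean theory) and uses it as a competitor for the full problem, paying an extra cost only for the mass outside $B_\Lambda$, which is $\lesssim \int_{\mathcal M\setminus B_\Lambda} d(x,x_0)^r\,d\mu \to 0$ as $\Lambda\to\infty$ uniformly in $N$ by the moment assumption. For the lower bound and for the convergence of empirical measures, the key is that an optimal configuration cannot put more than a vanishing fraction of its points far from $\Supp\mu$ at the relevant scale; this again follows from \eqref{eq:momento}, which guarantees that the contribution to $F_{N,r}$ of points at distance $\gtrsim\Lambda$ is negligible. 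Combining the matched upper and lower bounds as $N\to\infty$ and then $\Lambda\to\infty$, together with the tightness of $\frac1N\sum\delta_{x^i}$ (which also comes from the moment bound, preventing escape of mass), one identifies the weak limit as $\rho^{d/(d+r)}\,d{\rm vol}\big/\int\rho^{d/(d+r)}\,d{\rm vol}$ on $\mathcal M$, which is \eqref{flow1:close}.

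The main obstacle is the geometric step: controlling the distortion of the optimal-transport/quantization structure under $\exp_{x_0}$ on balls of radius $\Lambda$ that must be allowed to grow. On a general complete manifold $\exp_{x_0}$ need not be a diffeomorphism — there is a cut locus — so one cannot globally pull back to $T_{x_0}\mathcal M$, and one must instead work with the localization argument alluded to in the excerpt (patching small balls on which normal coordinates are well-behaved, as in \cite{I1, B}) and quantify the accumulated error by $A_{x_0}$. Making the error estimates uniform enough that the limits $N\to\infty$ and $\Lambda\to\infty$ can be taken in the correct order, while showing the hypothesis \eqref{eq:momento} is precisely the borderline condition for this (the sharpness being witnessed by the announced example), is the delicate heart of the argument; the rest is a fairly routine adaptation of the $\Gamma$-convergence / direct quantization machinery of \cite{GL}.
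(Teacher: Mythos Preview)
The paper you are working from is a survey: it states Theorem~\ref{thm:noncpt} and immediately refers to \cite{I1} for the proof, giving none of its own. So there is no ``paper's proof'' here to compare your outline against; the only honest assessment is whether your sketch is internally coherent and plausibly matches the strategy of \cite{I1}.

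On that score, your overall architecture (handle a large compact ball $B_\Lambda$ via the localization argument already cited for the compact case, control the tail by \eqref{eq:momento}, then let $N\to\infty$ and $\Lambda\to\infty$) is the right one, but your reading of the role of $A_{x_0}$ is off. You say that pushing through $\exp_{x_0}$ produces a Euclidean problem ``whose $r$-th moment is controlled by $\int A_{x_0}(d(\cdot,x_0))^r\,d\mu$''. That is not what $A_{x_0}$ does: under $\exp_{x_0}^{-1}$ the radial coordinate already equals the geodesic distance, so the radial moment of the pullback is simply $\int d(x,x_0)^r\,d\mu$, controlled by the first term of \eqref{eq:momento} alone. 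The quantity $A_{x_0}(R)$ measures the \emph{tangential} stretching of $\exp_{x_0}$ on spheres of radius $R$, and it enters in the \emph{upper-bound construction for the tail}: to build a competitor one must cover $\mathcal M\setminus B_\Lambda$ with finitely many points, and because large geodesic spheres may have huge intrinsic diameter (governed by $A_{x_0}$), the number of points needed --- and the residual cost --- is what the second integral in \eqref{eq:momento} bounds. This is exactly why the condition becomes exponential on $\mathbb H^d$ and reduces to a moment on $\mathbb R^d$. Relatedly, the compact piece $B_\Lambda$ is not handled by a single global chart through $\exp_{x_0}$ (the cut-locus obstruction you flag is real); it is handled by the compact-manifold result via small normal-coordinate patches as in \cite{I1,B}, with $A_{x_0}$ appearing only in the tail estimate.
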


\begin{remark} If $\mathcal M=\mathbb H^d$ is the hyperbolic space, then $A_{x_0}(R)=\sinh R$ and \eqref{eq:momento} reads as
$$
 \int_{\mathbb H^d} d(x,x_0)^{r+\delta}\,d\mu(x)+  \int_{\mathbb H^d} \sinh\bigl({d(x,x_0)}\bigr)^r  \,d\mu(x) \approx \int_{\mathbb H^d} e^{r\,d(x,x_0)} \,d\mu(x)<\infty.
$$
If  $\mathcal M=\mathbb R^d$ then $A_{x_0}(R)=R$ and \eqref{eq:momento} coincides with the finiteness of the $(r+\delta)$-moment of $\mu$, as in \eqref{moment rho}.
\end{remark}
We notice that the moment  condition \eqref{moment rho} required on $\mathbb R^d$ is not sufficient to ensure the validity 
of the result on $\mathbb H^d$. Indeed, as shown in \cite{I1},
there exists a measure $\mu$ on $\mathbb H^2$
such that 
$$
\int_{\mathbb H^2} d(x,x_0)^p\,d\mu<\infty \qquad \mbox{for\, all}\, \, \,p >0,\,\mbox{for\, all}\, \,  \,x_0 \in \mathbb H^2,
$$
but for which the result fails. 
\medskip

 {\it Acknowledgments:} The author would like to thank Megan Griffin-Pickering for her useful comments on a preliminary version of this paper and the L'Or\'eal Foundation for partially supporting this project by awarding the L'Or\'eal-UNESCO \emph{For Women in Science France fellowship}.

\end{document}